\documentclass[a4 paper,12pt,bezier]{article}
\usepackage[top=3cm,bottom=3cm,left=2.5cm,right=2.5cm]{geometry}
\usepackage{amsmath}
\usepackage{amsfonts,amsthm,amssymb}
\usepackage{amsfonts}
\usepackage{graphics,subfigure,caption2}
\usepackage{tikz, color}
\usepackage{graphics,epstopdf}
\usepackage{latexsym,bm}
\usepackage{amsfonts,amsthm,amssymb,mathrsfs,bbding}
\usepackage{hyperref}
\usepackage{CJK}
\usepackage{color}
\usepackage{indentfirst}
\usepackage{graphicx}
\usepackage{cleveref}
\usepackage{booktabs}
\usepackage{multirow}
\usepackage{cleveref}

\newtheorem{lem}{Lemma}[section]
\newtheorem{thm}[lem]{Theorem}

\newtheorem{con}[lem]{Conjecture}

\newtheorem{prob}[lem]{Problem}

\theoremstyle{plain}

\usepackage{cite}

\begin{document}
	\begin{CJK}{GBK}{song}
		\title{Spectral and size conditions for spanning $k$-trees in tough graphs}
		\author{Siyuan Liang, Tao Tian\footnote{Corresponding author. E-mail: 13926736123@163.com (S. Liang), taotian0118@163.com (T. Tian).}\\
			\small  School of Mathematics and Statistics, Key Laboratory of Analytical Mathematics and\\
			\small  Applications $($Ministry of Education$)$, Fujian Key Laboratory of Analytical Mathematics and\\
			\small  Applications (FJKLAMA), Center for Applied Mathematics of Fujian Province $($FJNU$)$,\\
			\small Fujian Normal University,
			\small Fuzhou 350117, PR China.}
		\date{}
		\maketitle

		\maketitle {\flushleft\bf Abstract}:
		The toughness of a graph is a crucial parameter for characterizing its structural properties. The toughness of a non-complete graph $G$ is defined as $\tau(G) = \min \{ \dfrac{|S|}{c(G - S)} : S \subseteq V(G), c(G-S) > 1 \}$, where $c(G)$ denotes the number of components of $G$. We define $\tau(K_n) = \infty$. A graph $G$ is said to be $\tau$-tough if $|S| \ge \tau \cdot c(G-S)$ for every vertex cut $S$ of $G$. Let $k \ge 3$ be an integer. For $\frac{1}{k-\eta}$-tough graphs with $\eta \in \{0, 1\}$, Liu, Fan and Shu \cite{a34} derived sufficient conditions in terms of the spectral radius and the signless Laplacian spectral radius for the existence of a spanning $k$-tree. Jia and Lu \cite{a24}, for the case $\frac{1}{k-1} \leq \tau(G) < \frac{1}{k-2}$, established sufficient conditions in terms of the spectral radius and the signless Laplacian spectral radius for the existence of a spanning $k$-tree. Motivated by these results, in this paper, we further investigate sufficient conditions for the existence of a spanning $k$-tree when $\frac{1}{k} \leq \tau(G) < \frac{1}{k-1}$. Specifically, for a connected $\frac{t}{t(k-1)+1}$-tough graph of sufficiently large order $n$ (where $t \ge 1$ is an integer), we provide sufficient conditions for the existence of a spanning $k$-tree in terms of the spectral radius and the signless Laplacian spectral radius. Furthermore, we establish a lower bound on the size (number of edges) to guarantee the existence of a spanning $k$-tree.

		\maketitle {\flushleft\textit{\bf Keywords}}: Toughness; Spanning $k$-tree; Spectral radius; Signless Laplacian spectral radius

		\section{  Introduction  }\label{sec1}
		This paper considers only finite, undirected, simple graphs. Let $G=(V(G), E(G))$ be a graph, where $V(G)$ is the vertex set and $E(G)$ is the edge set. The order and size of graph $G$ are denoted by $|V(G)|=n$ and $|E(G)|=e(G)$, respectively. For $v\in V(G)$, we let $N_G(v)$ and $d_G(v)$ denote the \emph{neighborhood} and the \emph{degree} of $v$ in $G$, respectively. The maximum degree of a graph $G$ is denoted by $\Delta(G)$. The number of components of graph $G$ is denoted by $c(G)$. Let $G_1$ and $G_2$ be two disjoint graphs. The union $G_1 \cup G_2$ is the graph with vertex set $V(G_1) \cup V(G_2)$ and edge set $E(G_1) \cup E(G_2)$. The join $G_1 \vee G_2$ is defined as the graph obtained from $G_1 \cup G_2$ by adding edges connecting every vertex of $G_1$ to every vertex of $G_2$. For any real number $c$, let $\lceil c \rceil$ denote the smallest integer greater than or equal to $c$.
		
		Let $G$ be a graph with vertex set $\{v_1, v_2, \dots, v_n\}$. Its adjacency matrix is defined as $A(G)=(a_{ij})_{n \times n}$, where $a_{ij}=1$ if $v_i v_j \in E(G)$ and $a_{ij}=0$ otherwise. The degree diagonal matrix of a graph $G$, denoted by $D(G)$, is the diagonal matrix whose diagonal entries are the degrees of the vertices of $G$. The signless Laplacian matrix of graph $G$ is defined as $Q(G)=D(G)+A(G)$. The eigenvalues of $A(G)$ and $Q(G)$ are called the adjacency eigenvalues and signless Laplacian eigenvalues of $G$, respectively. The largest eigenvalues of $A(G)$ and $Q(G)$ are called the  spectral radius and the signless Laplacian spectral radius of $G$, respectively, and are denoted by $\rho(G)$ and $q(G)$, respectively.
		
		 A cycle (resp. path) in $G$ is called a Hamilton cycle (resp. Hamilton path) if it contains every vertex of $G$. A graph is said to be Hamiltonian if it contains a Hamilton cycle. Let $k \geq 2$ be an integer. A tree $T$ is called a \textit{k-tree} if $d_T(v) \leq k$ for every vertex $v \in V(T)$. A \textit{k-tree} $T$ is called a \textit{spanning k-tree} of a connected graph $G$ if $V(T) = V(G)$. Obviously, a Hamilton path is a spanning $2$-tree of a graph.
		
		A graph $G$ is said to be \textit{$\tau$-tough} if $|S| \geq \tau \cdot c(G - S)$ for every vertex subset $S \subseteq V(G)$ with $c(G - S) \geq 2$. The \textit{toughness} $\tau(G)$ of a graph $G$ is the maximum value of $\tau$ for which $G$ is $\tau$-tough (by convention, the toughness of a complete graph $K_n$ is $\tau(K_n) = \infty$). Therefore, if $G$ is not a complete graph, then
		\[
		\tau(G) = \min\left\{ \frac{|S|}{c(G - S)} : S \subseteq V(G), c(G-S) > 1 \right\}.
		\]
		 It is easy to verify that every Hamiltonian graph is $1$-tough, but the converse is not true. The following conjecture on Hamiltonicity, proposed by Chv\'{a}tal~\cite{a25}, remains open.

		\begin{con}[Chv\'{a}tal \cite{a25}]\label{conjecture 1.1}
			There exists a constant $t_0$ such that every $t_0$-tough graph with at least $3$ vertices is Hamiltonian.
		\end{con}
		
		Bauer, Broersma and Veldman \cite{a20} constructed $(\frac{9}{4} - \varepsilon)$-tough non-Hamiltonian graphs for arbitrary $\varepsilon > 0$;  therefore, if Conjecture \ref{conjecture 1.1} holds, then $t_0 \geq \frac{9}{4}$. This conjecture has been verified for several well-studied classes of graphs in numerous papers, see \cite{a21, a22, a23, a33, a5, a6}.

		In 1989, Win \cite{a2} gave a sufficient toughness condition for the existence of a spanning $k$-tree in connected graphs. Numerous researchers have established a variety of spectral sufficient conditions for the existence of spanning $k$-trees in graphs. Ning and Ge \cite{a31} investigated spectral radius conditions that guarantee the existence of a spanning 2-tree and a Hamilton cycle in graphs. Liu, Shiu and Xue \cite{a14} provided sufficient signless Laplacian spectral radius and size conditions for the existence of a spanning 2-tree and a Hamilton cycle in bipartite graphs. For any integer $k \geq 3$, Fan et al. \cite{a37} established sufficient spectral radius and signless Laplacian spectral radius conditions for connected graphs that guarantee the existence of a spanning $k$-tree. For $k \geq 4$, Zhou and Wu \cite{a4} derived distance spectral radius conditions that ensure the existence of a spanning $k$-tree in connected graphs. For $k \geq 4$, Zhou, Zhang and Liu \cite{a28} obtained an upper bound on the distance signless Laplacian spectral radius that guarantees the existence of a spanning $k$-tree.
		
		Wu \cite{a26} established sharp lower bounds on the number of edges and the spectral radius for a connected graph to guarantee the existence of a spanning tree with leaf degree at most $k$. Chen et al. \cite{a30} derived sufficient conditions on the number of edges, the spectral radius, and the signless Laplacian spectral radius that guarantee the existence of a spanning tree with leaf distance at least 4. Ao, Liu and Yuan \cite{a27} provided sufficient conditions, in terms of the number of $r$-cliques, for a graph with a specific minimum degree to be $k$-factor-critical, and established bounds on various spectral parameters that guarantee the existence of a spanning $k$-tree. Chen, Li and Xu \cite{a39} derived sufficient conditions based on the spectral radius of the matrix $A_\alpha(G) = \alpha D(G) + (1 - \alpha)A(G)$, algebraic connectivity, nullity, and energy for determining whether a graph contains a spanning $k$-ended tree.

		In \cite{a34}, Liu, Fan and Shu presented the following spectral conditions that guarantee the existence of a spanning $k$-tree.
		
		\begin{thm}[\textnormal{Liu, Fan and Shu}\cite{a34}]
			Let $G$ be a connected $\frac{1}{k-\eta}$-tough graph of order $n$ with $k \ge 3$ and $\eta \in \{0, 1\}$. Each of the following holds.
			
			(i) If $n \ge 8k + 12$ and $\rho(G) \ge \rho(K_{\eta+2} \vee (K_{n-(k-1)(\eta+2)-2} \cup [(k - 2)(\eta + 2) + 2]K_1))$, then $G$ contains a spanning $k$-tree unless $G \cong K_{\eta+2} \vee (K_{n-(k-1)(\eta+2)-2} \cup [(k - 2)(\eta + 2) + 2]K_1)$.
			
			(ii) If $n \ge 11k + 47$ and $q(G) \ge q(K_{\eta+2} \vee (K_{n-(k-1)(\eta+2)-2} \cup [(k - 2)(\eta + 2) + 2]K_1))$, then $G$ contains a spanning $k$-tree unless $G \cong K_{\eta+2} \vee (K_{n-(k-1)(\eta+2)-2} \cup [(k - 2)(\eta + 2) + 2]K_1)$.
		\end{thm}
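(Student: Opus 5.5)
We would follow the standard spectral extremal route, with a Win-type characterization of spanning $k$-trees as the combinatorial input. By Win's theorem (in the form used in \cite{a37}), a connected graph $G$ has no spanning $k$-tree only if there is a set $S\subseteq V(G)$ with $c(G-S)\ge (k-2)|S|+3$. So suppose $G$ is a connected $\frac{1}{k-\eta}$-tough graph with no spanning $k$-tree, and fix such an $S$ with $s=|S|$. Toughness gives $s\ge \frac{1}{k-\eta}\,c(G-S)\ge \frac{(k-2)s+3}{k-\eta}$. For $\eta=0$ this yields $2s\ge 3$, so $s\ge 2$. For $\eta=1$ it yields $s\ge 3$. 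In both cases $s\ge \eta+2$.

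Next we saturate $G$ without creating a spanning $k$-tree. Let $c=(k-2)s+3$ and choose $c$ components of $G-S$, merging any surplus components into one of them. Adding edges can only increase $\rho$ and $q$, so $G$ is a spanning subgraph of
\[
G_s(n_1,\dots,n_c)=K_s\vee(K_{n_1}\cup\cdots\cup K_{n_c}),\qquad \sum_i n_i=n-s .
\]
By the usual Kelly-type argument (moving vertices to the largest clique increases $\rho$ and $q$), the maximum over partitions is attained at
\[
H_s=K_s\vee\bigl(K_{n-s-(c-1)}\cup (c-1)K_1\bigr),\qquad c-1=(k-2)s+2 .
\]
A vertex count check shows that for $s=\eta+2$ this is exactly $K_{\eta+2}\vee(K_{n-(k-1)(\eta+2)-2}\cup[(k-2)(\eta+2)+2]K_1)$. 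It is also $\frac{1}{k-\eta}$-tough: the worst cut is $S$ itself, with ratio $\frac{\eta+2}{(k-2)(\eta+2)+3}\ge \frac{1}{k-\eta}$, and this is checked directly. Finally, $H_{\eta+2}$ has no spanning $k$-tree, since Win's condition holds with equality.

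The main obstacle is to show $\rho(H_s)<\rho(H_{\eta+2})$, and likewise for $q$, for every admissible $s$ with $\eta+2<s\le s_{\max}$. Here $s_{\max}\approx n/(k-1)$ comes from $n\ge s+(k-2)s+3$.

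\textbf{Adjacency case.} For small $s$ we would use the equitable quotient matrix of $H_s$ with parts $K_s$, $K_{n-(k-1)s-2}$ and the independent set. Its largest root is the largest root of a cubic $f_s(x)$. We would show $f_s(x)-f_{\eta+2}(x)>0$ for $x\ge \rho(H_{\eta+2})\ge n-(k-1)(\eta+2)-2$. This uses $n\ge 8k+12$.

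For large $s$, i.e.\ near $s_{\max}$, the cubic comparison becomes delicate. There we would instead use the crude bound $\rho(H_s)\le\sqrt{2e(H_s)-n+1}$ (Hong's bound), with $e(H_s)\le\binom{n-(k-2)s-2}{2}+s((k-2)s+2)$. This bound is a convex function of $s$, so it suffices to check it at the two endpoints against the lower bound $\rho(H_{\eta+2})>n-(k-1)(\eta+2)-3$.

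\textbf{Signless Laplacian case.} The analogous comparison uses the quotient matrix of $Q$. For large $s$ we would use the bound $q\le \frac{2e}{n-1}+n-2$. The weaker constant in $n\ge 11k+47$ reflects the looser estimates here.

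Equality analysis: if $\rho(G)=\rho(H_{\eta+2})$, the strict inequalities force $s=\eta+2$ and $G=H_{\eta+2}$. This follows by Perron–Frobenius, since $H_{\eta+2}$ is connected and adding any edge strictly increases $\rho$ and $q$. Hence $G\cong H_{\eta+2}$, which is the stated exception.
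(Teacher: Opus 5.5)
Your reduction is the one the paper uses to prove Theorem~\ref{Theorem 1.3}. The paper only cites the present statement, but Theorem~\ref{Theorem 1.3} contains it as the cases $s_0=\eta+2$ (take $t=1$ for $\eta=0$, and $t\ge 3$ for $\eta=1$). The shared steps are: Win's lemma plus toughness give $s\ge\eta+2$; saturation and Lemma~\ref{Lemma 2.3} reduce to $H_s$; and $s\ge\eta+3$ is excluded by an edge-count bound that is convex in $s$.

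You diverge in the quotient-matrix regime for small $s$. You leave it unverified, and it is not needed. It was forced only by your weak comparison value $\rho(H_{\eta+2})>n-(k-1)(\eta+2)-3$, which uses the clique $K_{n-(k-1)(\eta+2)-2}$ alone. But $K_{\eta+2}\vee K_{n-(k-1)(\eta+2)-2}=K_{n-(k-2)(\eta+2)-2}$ is a proper subgraph of $H_{\eta+2}$. Hence $\rho(H_{\eta+2})>n-(k-2)(\eta+2)-3$ and $q(H_{\eta+2})>2(n-(k-2)(\eta+2)-3)$. Against these values, Hong's bound (resp.\ Das's bound) checked at the endpoints $s=\eta+3$ and $s=(n-3)/(k-1)$ already covers all $\eta+3\le s\le (n-3)/(k-1)$. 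It gives $\rho(H_s)\le n-(k-2)(\eta+2)-3$ once $n\ge 8k+12$, and $q(H_s)\le 2(n-(k-2)(\eta+2)-3)$ once $n\ge 11k+47$, for every $k\ge3$. The paper's thresholds $N_1,N_2$ lie below these bounds for $s_0\in\{2,3\}$. With your weaker value the endpoint check really does fail at $s=\eta+3$ for small $k$. For example, with $k=3$, $\eta=0$, $n=36$ one gets $\sqrt{2e(H_3)-n+1}=\sqrt{925}\approx 30.4$, which exceeds $29$ but not $31$.

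Separately, your side claim that $H_{\eta+2}$ has no spanning $k$-tree is false. Win's condition is only sufficient, so violating it by one obstructs nothing. A spanning $k$-tree $T$ only forces $c(G-S)\le c(T-S)\le (k-1)|S|+1$. In fact $H_{\eta+2}$ has a spanning $k$-tree. Attach each of the $\eta+2$ dominating vertices to a distinct vertex of a Hamilton path of $K_{n-(k-1)(\eta+2)-2}$, and to at most $k-1$ of the isolated vertices. Since $(k-2)(\eta+2)+2\le (k-1)(\eta+2)$, this covers all isolated vertices with maximum degree at most $k$ (Fig.~\ref{fig 1}). So the exception in the statement is vacuous, and your final step ``hence $G\cong H_{\eta+2}$'' actually yields a spanning $k$-tree. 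This is exactly how the paper drops the exception in Theorem~\ref{Theorem 1.3}. The error does not damage the implication you were asked to prove, since neither this claim nor your toughness check of $H_{\eta+2}$ is used in it.
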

		
		Recently, for the case $\frac{1}{k-1} \leq \tau(G) < \frac{1}{k-2}$, Jia and Lu \cite{a24} established sufficient conditions for the existence of a spanning $k$-tree in connected $\frac{t}{t(k-2)+1}$-tough graphs in terms of the spectral radius and the signless Laplacian spectral radius.
		
		For the case $\frac{1}{k} \le \tau (G) < \frac{1}{k-1}$, the following problem was proposed by Jia and Lu in \cite{a24}.
		
		\begin{prob}[Jia and Lu \cite{a24}]\label{problem 1.3}
			When $\frac{1}{k} \le \tau < \frac{1}{k-1}$, what spectral conditions can guarantee the existence of a spanning $k$-tree in a $\tau$-tough graph?
		\end{prob}
		
		In this paper, we provide an answer to Problem \ref{problem 1.3}. In the following, let $N_1(s_0, k) = \max \Big\{ (s_0+1)k + s_0 + 3 + \frac{2s_0+5}{k-2} + \frac{4}{(k-2)^2}, \allowbreak s_0^2 + \frac{2k^2-5k+6}{2(k-2)}s_0 + \frac{k^2+3k-8}{2(k-2)} \Big\}$, and let $N_2(s_0, k) = \max \Big\{ (s_0+1)k + s_0 + 3 + \frac{2s_0+5}{k-2} + \frac{4}{(k-2)^2}, \allowbreak \frac{1}{2}(s_0+1)^2k + \frac{3s_0+5}{2} + \frac{2s_0+3}{k-2} \Big\}$, where $s_0 = \left\lceil \frac{3t}{t+1} \right\rceil$, $k \ge 3$ and $t \ge 1$ are integers.
		
		\begin{thm}\label{Theorem 1.3}
			Let $G$ be a connected $\frac{t}{t(k-1)+1}$-tough graph of order $n$, and let $s_0 = \left\lceil \frac{3t}{t+1} \right\rceil$, where $k \ge 3$ and $t \ge 1$ are integers.  Each of the following holds.
			
			(i) If $n \ge N_1(s_0, k) $ and $\rho(G) \geq \rho(K_{s_0} \vee (K_{n-s_0(k-1)-2} \cup (s_0(k-2)+2)K_1))$, then $G$ contains a spanning $k$-tree.
			
			(ii) If $n \ge N_2(s_0, k) $ and $	q(G) \geq q(K_{s_0} \vee (K_{n-s_0(k-1)-2} \cup (s_0(k-2)+2)K_1))$, then $G$ contains a spanning $k$-tree.
		\end{thm}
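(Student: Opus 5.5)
We argue by contradiction: suppose $G$ satisfies the hypotheses of (i) (resp. (ii)) but has no spanning $k$-tree. The structural tool will be Win's theorem \cite{a2}: if a connected graph $G$ has no spanning $k$-tree, then there is a nonempty $S\subseteq V(G)$ with $c(G-S)\ge (k-2)|S|+3$. Write $s=|S|$. Toughness gives
\[
\frac{s}{c(G-S)} \ge \frac{t}{t(k-1)+1},
\]
so $s\,(t(k-1)+1)\ge t\big((k-2)s+3\big)$, i.e. $s(t+1)\ge 3t$. Hence $s\ge \lceil 3t/(t+1)\rceil=s_0$. Adding edges cannot increase $c(G-S)$, so $G$ is a spanning subgraph of $G_1=K_s\vee(K_{n_1}\cup K_{n_2}\cup\cdots\cup K_{n_c})$ with $c=(k-2)s+3$ and $\sum n_i=n-s$. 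By Perron–Frobenius monotonicity, $\rho(G)\le\rho(G_1)$ and $q(G)\le q(G_1)$.

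The next step is the standard component-merging lemma: among such joins, the spectral radius (and signless Laplacian spectral radius) is maximized when all but one component are single vertices. This follows by moving vertices from a smaller clique to the largest one, using the eigenvector comparison $x_u\ge x_v$ inside larger cliques. So $\rho(G)\le\rho(H_s)$, where
\[
H_s=K_s\vee\big(K_{n-s(k-1)-2}\cup(s(k-2)+2)K_1\big),
\]
and similarly for $q$. Note that $H_{s_0}$ is exactly the extremal graph in the statement.

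The main obstacle is showing that $\rho(H_s)<\rho(H_{s_0})$ for every $s_0<s\le (n-2)/(k-1)$, with the analogous inequality for $q$, once $n\ge N_1(s_0,k)$ (resp. $N_2$). I would handle it in two regimes.

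\emph{Large $s$.} Use crude bounds: $\rho(H_{s_0})\ge n-s_0(k-1)-3+s_0$, since $H_{s_0}$ contains $K_{n-s_0(k-1)-2+s_0}$. On the other side, $\rho(H_s)\le\sqrt{2e(H_s)-n+1}$ by Hong's bound, with $e(H_s)$ computed explicitly. For $q$, use $q\le \max_{uv}(d_u+d_v)$ or $q\le 2e/(n-1)+n-2$. The hypothesis $n\ge N_i$ should ensure the strict inequality when $s$ is a bit larger than $s_0$.

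\emph{Small $s$ (or all $s$ by a direct comparison).} Use the equitable quotient matrix of $H_s$ with parts $K_s$, $K_{n-s(k-1)-2}$ and the independent set. This gives a cubic characteristic polynomial $f_s(x)$ whose largest root is $\rho(H_s)$. Then show that $f_s(\rho(H_{s_0}))>0$, together with the fact that $\rho(H_{s_0})$ exceeds the other roots, so $\rho(H_s)<\rho(H_{s_0})$. The difference $f_s(x)-f_{s_0}(x)$ factors as $(s-s_0)$ times a quadratic in $x$, and its sign at $x\approx n-s_0(k-2)$ is controlled by $n\ge N_1$. The second term of $N_1$ (quadratic in $s_0$) appears to come exactly from this estimate.

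The $q$ case runs in parallel, with a $3\times3$ quotient of $Q$ and the threshold $N_2$.

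Finally, $\rho(G)\le\rho(H_s)<\rho(H_{s_0})$ for $s>s_0$ contradicts the hypothesis. If $s=s_0$, equality forces $G\cong H_{s_0}$. I would then need to check that $H_{s_0}$ itself is not $\frac{t}{t(k-1)+1}$-tough, or else handle it separately, because the statement has no exceptional graph. Indeed, with $S=V(K_{s_0})$ we get $c(H_{s_0}-S)=s_0(k-2)+3$, and if $s_0(t+1)<3t$ fails this needs care. Since $s_0=\lceil 3t/(t+1)\rceil$ makes the toughness inequality possibly tight, this equality case also needs verification.
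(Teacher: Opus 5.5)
\paragraph{Gap: the equality case $s=s_0$.} Your reduction to $\rho(G)\le\rho(H_s)$ (and likewise for $q$) with $s\ge s_0$ matches the paper. However, you leave the case $s=s_0$ open, and the route you suggest for it cannot work. There you correctly get $G\cong H_{s_0}$, and you propose to show that $H_{s_0}$ is not $\frac{t}{t(k-1)+1}$-tough. But it always is that tough:
\begin{itemize}
\item If a set $S'$ misses some vertex of $K_{s_0}$, that vertex dominates $H_{s_0}-S'$, so $H_{s_0}-S'$ is connected. Hence every disconnecting set contains $V(K_{s_0})$.
\item Adding further vertices to $S'$ never increases the number of components, so $\tau(H_{s_0})=\frac{s_0}{s_0(k-2)+3}$.
\item This is $\ge\frac{t}{t(k-1)+1}$ exactly when $s_0(t+1)\ge 3t$, which holds for every $t$ by the definition $s_0=\lceil 3t/(t+1)\rceil$. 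The situation you flagged as needing care never arises.
\end{itemize}
So $H_{s_0}$ satisfies every hypothesis. Since the statement allows no exceptional graph, the case can only be closed by showing that $H_{s_0}$ itself has a spanning $k$-tree. That is the missing idea, and it is true because Win's condition is sufficient but not necessary. Take a Hamilton path $P$ of $K_{n-s_0(k-1)-2}$; it has $n-s_0(k-1)-2\ge k+2s_0+1$ vertices under the bound on $n$. Join each vertex of $K_{s_0}$ to a distinct vertex of $P$ and to at most $k-1$ of the $s_0(k-2)+2$ isolated vertices. This needs $s_0(k-1)\ge s_0(k-2)+2$, i.e.\ $s_0\ge 2$, which holds since $s_0\in\{2,3\}$. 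Every vertex of $K_{s_0}$ then has degree at most $k$, and vertices of $P$ have degree at most $3\le k$. This is exactly how the paper disposes of $s=s_0$.

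\paragraph{The case $s\ge s_0+1$.} Here your ``small $s$'' quotient-matrix regime is unnecessary. Your guess that the quadratic-in-$s_0$ term of $N_1$ comes from it is also mistaken. Hong's bound alone handles every such $s$:
\begin{itemize}
\item $f(s)=2e(H_s)-n+1$ is a convex quadratic in $s$ on $[s_0+1,\frac{n-3}{k-1}]$. The upper end comes from $n-s\ge (k-2)s+3$, not $n\ge (k-1)s+2$.
\item The first term in $N_1$ is precisely what gives $f(s_0+1)\ge f(\frac{n-3}{k-1})$, so the maximum of $f$ on this interval is at $s_0+1$.
\item The second term in $N_1$ is precisely the condition $f(s_0+1)\le (n-s_0(k-2)-3)^2$.
\end{itemize}
Hence $\rho(H_s)\le n-s_0(k-2)-3=\rho(K_{n-s_0(k-2)-2})<\rho(H_{s_0})$. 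The $q$ case is identical, using Das's bound $q\le\frac{2e}{n-1}+n-2$ and $N_2$. Your cubic-polynomial step (the sign of $f_s(\rho(H_{s_0}))$ and the location of the other roots) is only sketched and unverified against the stated thresholds, and it is not needed.
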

		
		Next, we establish a lower bound on the size of a graph that guarantees the existence of spanning $k$-tree.
		
		\begin{thm}\label{Theorem 1.4}
			Let $G$ be a connected $\frac{t}{t(k-1)+1}$-tough graph with $n$ vertices, and let $s_0 = \left\lceil \frac{3t}{t+1} \right\rceil$, where $k \ge 3$ and $t \ge 1$ are integers. If $n \ge \frac{s_0k^3 - (3s_0-2)k^2 + (2s_0-5)k + 6}{(k-2)^2}$, and
			\[
			e(G) > \binom{n-s_0(k-2)-2}{2} + s_0(s_0(k-2)+2),
			\]
			then $G$ contains a spanning $k$-tree.
		\end{thm}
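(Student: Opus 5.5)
We will argue by contradiction, using the standard characterization of spanning $k$-trees due to Win: if a connected graph $G$ has no spanning $k$-tree, then there is a nonempty set $S\subseteq V(G)$ with $c(G-S)\ge (k-2)|S|+3$. (This is not among the results stated earlier in the paper, so we are importing it.) So suppose $G$ has no spanning $k$-tree, and fix such an $S$ with $|S|=s\ge 1$ and $c(G-S)=q\ge (k-2)s+3$.

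\textbf{Step 1: toughness forces $s\ge s_0$.} Since $q\ge 2$, the toughness hypothesis gives
\[
s\ge \frac{t}{t(k-1)+1}\,q\ge \frac{t\bigl((k-2)s+3\bigr)}{t(k-1)+1}.
\]
Clearing denominators,
\[
s\bigl(t(k-1)+1\bigr)\ge t(k-2)s+3t,
\]
that is, $s(t+1)\ge 3t$. Hence $s\ge \lceil 3t/(t+1)\rceil=s_0$.

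\textbf{Step 2: the extremal edge bound.} The most edges occur when $G-S$ consists of $q-1$ isolated vertices and one clique, with $S$ complete and joined to everything. Adding edges only merges components, so we may assume $q=(k-2)s+3$ exactly. Then
\[
e(G)\le f(s):=\binom{n-(k-2)s-2}{2}+s\bigl((k-2)s+2\bigr).
\]
To check this, write $V=K_s\vee(K_{n-(k-1)s-2}\cup((k-2)s+2)K_1)$. The clique $K_s\vee K_{n-(k-1)s-2}$ has $n-(k-2)s-2$ vertices, and the remaining $(k-2)s+2$ isolated vertices each send $s$ edges to $S$. More generally, for a fixed number of components the edge count is maximized when all components but one are singletons, and the stated count is exactly $f(s)$.

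\textbf{Step 3: comparison of $f(s)$ with $f(s_0)$.} We must show $f(s)\le f(s_0)$ for every feasible $s$ with $s_0\le s\le s_{\max}$, where $n-(k-1)s-2\ge 0$ roughly bounds $s\le (n-2)/(k-1)$. Then $e(G)\le f(s_0)$ contradicts the hypothesis. Since $f$ is a quadratic in $s$ with positive leading coefficient $\frac{(k-2)^2}{2}+(k-2)$, it is convex on the interval. So it suffices to check the two endpoints:
\[
f(s_0)\ge f(s_{\max}).
\]

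\textbf{Step 4: the endpoint check (the main obstacle).} Justifying the right value of $s_{\max}$ is delicate: if the large component is empty, $q=n-s$, so the constraint is $n-s\ge (k-2)s+3$, giving $s\le (n-3)/(k-1)$. We will take $s_{\max}=\lfloor (n-3)/(k-1)\rfloor$, or handle the case of all singletons separately, where $e(G)\le \binom{s}{2}+s(n-s)$. At this endpoint the edge count is about $\frac{kn^2}{2(k-1)^2}$, which is smaller than $\frac{n^2}{2}$ minus terms linear in $n$. Expanding $f(s_0)-f(s_{\max})$ should give a quadratic in $n$ whose positivity reduces to the stated bound
\[
n\ge \frac{s_0k^3-(3s_0-2)k^2+(2s_0-5)k+6}{(k-2)^2}.
\]
The factor $(k-2)^2$ in that bound suggests the authors compare $f(s_0)$ with $f(s_0+1)$ and then use monotonicity. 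We would first try exactly that: show that $f(s+1)-f(s)$ is negative for $s$ near $s_0$ and that the endpoint dominates. The main work is the routine but messy algebra verifying the threshold, including the possibility that $G$ is itself the extremal graph at $s=s_0$. That case is excluded by the strict inequality in the hypothesis.
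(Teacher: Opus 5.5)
Steps 1--3 follow the paper's route. Win's criterion plus toughness gives $s\ge s_0$ (the paper's Lemma~3.1). Then $G$ is a spanning subgraph of $K_s\vee(K_{n-(k-1)s-2}\cup((k-2)s+2)K_1)$, so $e(G)\le f(s)$ with $s_0\le s\le\frac{n-3}{k-1}$. Your convexity reduction to the two endpoints is also sound. The ``all singletons'' case needs no separate treatment, since $f(s)$ already covers it: the large clique has $n-(k-1)s-2\ge 1$ vertices.

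The gap is Step~4. It is the only place the hypothesis on $n$ enters, and you never carry it out; you only say the expansion ``should'' reduce to the stated bound. The strategy you propose for it would also fail. Comparing $f(s_0)$ with $f(s_0+1)$ and invoking monotonicity is not valid, because $f$ is convex, not monotone. The inequality $f(s_0+1)<f(s_0)$ only says the minimizer lies to the right of $s_0$; $f$ can climb back above $f(s_0)$ near $s=\frac{n-3}{k-1}$. Moreover, $f(s_0)-f(s_0+1)$ is linear in $n$ with slope $k-2$, so it cannot produce the $(k-2)^2$ in the threshold. That factor comes from the far endpoint. There $f\bigl(\frac{n-3}{k-1}\bigr)\sim\frac{(2k-3)n^2}{2(k-1)^2}$ (not $\frac{kn^2}{2(k-1)^2}$), and $\frac12-\frac{2k-3}{2(k-1)^2}=\frac{(k-2)^2}{2(k-1)^2}$.

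The missing computation is short. One has
\[
f(s_0)-f(s)=\tfrac12(s-s_0)\bigl[2(k-2)n-(k^2-2k)s-s_0k^2+(2s_0-5)k+6\bigr].
\]
The bracket is decreasing in $s$, so for $s\le\frac{n-3}{k-1}$ it is at least
\[
\frac{1}{k-1}\Bigl[(k-2)^2n-\bigl(s_0k^3-(3s_0-2)k^2+(2s_0-5)k+6\bigr)\Bigr]\ge 0,
\]
and this holds exactly by the hypothesis on $n$. Equivalently, in your convexity framing, $f(s_0)-f\bigl(\frac{n-3}{k-1}\bigr)$ equals
\[
\frac{n-3-(k-1)s_0}{2(k-1)^2}\Bigl[(k-2)^2n-\bigl(s_0k^3-(3s_0-2)k^2+(2s_0-5)k+6\bigr)\Bigr].
\]
This is how the paper finishes. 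Without it, your argument does not show that the stated threshold on $n$ suffices.
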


 	   The rest of this paper is organized as follows. In Section \ref{sec2}, we introduce some preliminary lemmas concerning spectral radius $\rho(G)$ and signless Laplacian spectral radius $q(G)$ of a graph $G$, and present some lemmas concerning the structural properties of graphs. In Section \ref{sec3}, the proofs of our main results are given.

		\section{Preliminaries}\label{sec2}
		
		Before proceeding to the proofs of our main results, we introduce several useful lemmas.
		
		\begin{lem}[\textnormal{Win}\cite{a2}]\label{Lemma 2.1}
			Let $k \geq 3$ be an integer. If a connected graph $G$ satisfies that for every vertex subset $S \subseteq V(G)$,
			\[
			c(G - S) \leq (k - 2)|S| + 2,
			\]
			then $G$ contains a spanning $k$-tree.
		\end{lem}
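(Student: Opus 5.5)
The statement is Win's classical theorem, which the paper uses as a black box, so the proof sketched here is the standard exchange argument on spanning trees. Argue by contradiction: assume $G$ is connected, satisfies $c(G-S)\le (k-2)|S|+2$ for every $S\subseteq V(G)$, and has no spanning $k$-tree. Among all spanning trees of $G$, choose $T$ minimizing the excess vector: first the total excess $\sum_{v}\max\{0,d_T(v)-k\}$, then the number of vertices of degree $\ge k$. Since $T$ is not a $k$-tree, some vertex has $d_T(v)\ge k+1$.

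\textbf{Construction of $S$.} Put into $S_0$ all vertices with $d_T(v)\ge k+1$. Enlarge it by the usual closure. If two distinct components of $T-S_i$ are joined by an edge $xy$ of $G$, then adding $xy$ to $T$ creates a cycle through some vertex of $S_i$. If the cycle contains a vertex $w\in S_i$ with $d_T(w)=k$, add to $S$ the vertices of degree exactly $k$ on that cycle and iterate. Let $S$ be the final set.

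\textbf{Key claim.} In the final $S$, every vertex has $d_T\ge k$, and every component of $T-S$ is a component of $G-S$, i.e. there is no $G$-edge between different components of $T-S$. Suppose such an edge $xy$ existed. One traces back through the chain of exchanges that put vertices into $S$. Performing these swaps in order (add $xy$, delete a tree edge at a vertex of $S$, and so on) yields a spanning tree in which either some degree-$(\ge k+1)$ vertex loses a degree while no vertex rises above $k$, or the number of degree-$\ge k$ vertices drops. Either way this contradicts the choice of $T$. This bookkeeping is the main obstacle. The swaps must be done along a carefully ordered sequence (for instance, with a labelling recording when each vertex entered $S$) so that no vertex's degree ever exceeds $k$ at an intermediate step. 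I would follow the Win / Ellingham--Zha ordering argument and treat the newest vertices first.

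\textbf{Counting.} With the claim in hand, count components. Since $T[S]$ is a forest, $e(T[S])\le |S|-1$. Removing $S$ from the tree $T$ gives
\[
c(T-S)=\sum_{v\in S}d_T(v)-e(T[S])-|S|+1 .
\]
Every vertex of $S$ has $T$-degree at least $k$, and at least one has degree at least $k+1$, so
\[
c(T-S)\ge k|S|+1-(|S|-1)-|S|+1=(k-2)|S|+3 .
\]
By the claim, $c(G-S)=c(T-S)\ge (k-2)|S|+3$. This contradicts the hypothesis $c(G-S)\le (k-2)|S|+2$, so $G$ has a spanning $k$-tree.
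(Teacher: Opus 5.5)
The paper does not prove this lemma; it is quoted from Win, so your argument has to stand on its own. Your counting step is correct: if every vertex of $S$ has $T$-degree at least $k$ and one has $T$-degree at least $k+1$, then $c(T-S)\ge (k-2)|S|+3$. But the whole content of Win's theorem lies in producing such an $S$ with $c(G-S)=c(T-S)$, and that is where the proposal fails.

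As written, your closure never starts. $S_0$ contains only vertices of $T$-degree at least $k+1$, so the trigger ``the cycle contains $w\in S_i$ with $d_T(w)=k$'' cannot fire, and $S=S_0$. For this $S$ the key claim does not follow from the choice of $T$, which is all your exchange argument uses. Take $k=3$, join a vertex $c$ to $a_1,\dots,a_4$, attach two pendant vertices to each $a_i$, and add the edge $a_1a_2$. All three spanning trees have total excess $1$ and five vertices of degree at least $3$. So the tree $T$ avoiding $a_1a_2$ is optimal for both of your criteria. Yet $S_0=\{c\}$, the cycle $a_1ca_2$ contains no degree-$3$ vertex of $S_0$, and $a_1a_2$ joins two components of $T-c$, so $c(G-c)=3<4=c(T-c)$.

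What must enter $S$ are the \emph{endpoints} of such an edge that have $T$-degree $k$ (here $a_1,a_2$; a correct witness is $\{c,a_1,\dots,a_4\}$), not degree-$k$ vertices on the cycle. Moreover, the exchange argument you postpone as ``bookkeeping'' is the actual proof, and pointing to the Win / Ellingham--Zha ordering does not supply it. Even with the trigger corrected, a growing closure has a real difficulty. If a blocking edge $wz$ has both ends of degree $k$, lowering $w$ through a violating edge $xy$ still does not let you add $wz$ without pushing $z$ to degree $k+1$.

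A clean repair, in the style of F\"{u}rer and Raghavachari, is to shrink $S$ rather than grow it. Choose $T$ minimizing total excess only, and start with $S=\{v: d_T(v)\ge k\}$. Maintain this invariant: for every component $K$ of $T-S$ and every $r\in K$ with $d_T(r)=k$, some sequence of tree exchanges changes only edges with both ends in $K$, lowers $d(r)$ to $k-1$, and keeps every vertex of $K$ at degree at most $k$. The invariant holds vacuously at the start.

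Suppose $xy\in E(G)$ joins distinct components $K_x,K_y$ of $T-S$. First lower $x$ inside $K_x$ and $y$ inside $K_y$ if they have degree $k$. Nothing outside $K_x\cup K_y$ changes, and the $T$-path from $x$ to $y$ meets $K_x$ and $K_y$ in subpaths. Hence the new $x$--$y$ path passes through the same vertices of $S$. If one of these has degree at least $k+1$, adding $xy$ and deleting a cycle edge at it strictly lowers the total excess, a contradiction. Otherwise they all have degree exactly $k$, and you delete them from $S$. The merged component again satisfies the invariant, by the same recipe with the deleted edge taken at the vertex in question.

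Since $|S|$ decreases at each step, you end with an $S$ that contains the nonempty set of vertices of degree at least $k+1$, consists of vertices of $T$-degree at least $k$, and has no edge of $G$ between distinct components of $T-S$. Your counting then completes the proof, and the secondary criterion is unnecessary.
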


		\begin{lem}[\textnormal{Li and Feng}\cite{a3}]\label{Lemma 2.2}
			Let $G$ be a connected graph and $H$ be a subgraph of $G$. Then
			\[
			\rho(G) \geq \rho(H),
			\]
			with equality if and only if $G = H$.
		\end{lem}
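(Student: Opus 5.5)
The plan is to compare the adjacency matrices entrywise and use the Rayleigh quotient together with the Perron--Frobenius theorem. Let $V(G)=\{v_1,\dots,v_n\}$. I regard $A(H)$ as an $n\times n$ matrix by adding zero rows and columns for the vertices of $V(G)\setminus V(H)$; this does not change its spectral radius. Since $H$ is a subgraph of $G$, we then have $0\le A(H)\le A(G)$ entrywise.

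\textbf{Inequality.} First I take a nonnegative unit eigenvector $x$ of $A(H)$ for $\rho(H)$. Such a vector exists by Perron--Frobenius for nonnegative matrices, even if $H$ is disconnected. Because $A(H)\le A(G)$ entrywise and $x\ge 0$,
\[
\rho(H)=x^{T}A(H)x\le x^{T}A(G)x\le \max_{\|y\|=1}y^{T}A(G)y=\rho(G).
\]
The last step uses that $A(G)$ is symmetric, so $\rho(G)$ is the maximum of its Rayleigh quotient.

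\textbf{Equality case.} This is the only delicate step. Suppose $\rho(G)=\rho(H)$. Then both inequalities above are equalities. Equality in the Rayleigh quotient forces $x$ to be an eigenvector of $A(G)$ for $\rho(G)$.

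Since $G$ is connected, $A(G)$ is irreducible. By Perron--Frobenius, the $\rho(G)$-eigenspace is then spanned by a strictly positive vector, so the nonnegative vector $x$ must be strictly positive.

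The first equality gives
\[
x^{T}\bigl(A(G)-A(H)\bigr)x=\sum_{i,j}\bigl(a_{ij}(G)-a_{ij}(H)\bigr)x_ix_j=0.
\]
Each term in this sum is nonnegative and every $x_ix_j$ is positive. Hence $A(G)=A(H)$, so $H$ has every edge of $G$.

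It remains to see that $H$ has every vertex of $G$. If $H$ omitted a vertex $v_i$, then row $i$ of the padded $A(H)$ would be zero. Then $\rho(H)x_i=(A(H)x)_i=0$, and $\rho(H)=\rho(G)>0$ because a connected $G$ with at least one edge has positive spectral radius. This forces $x_i=0$, contradicting positivity. (The trivial case $G=K_1$ is immediate.) Therefore $V(H)=V(G)$ and $E(H)=E(G)$, that is, $G=H$.

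Conversely, $G=H$ obviously gives equality. The main point to handle carefully is the vertex-deleted case, which is resolved by the padding convention combined with strict positivity of the Perron vector.
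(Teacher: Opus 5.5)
Your proof is correct. The paper gives no proof of this lemma; it simply quotes Li and Feng. What you wrote is the standard Rayleigh-quotient and Perron--Frobenius argument: a nonnegative eigenvector $x$ of the padded $A(H)$ gives $\rho(H)\le x^{T}A(G)x\le\rho(G)$, and in the equality case the irreducibility of $A(G)$ forces $x>0$, hence $A(G)=A(H)$ entrywise. The vertex-omission step can be shortened: once $A(G)=A(H)$, an omitted vertex would have a zero row in $A(G)$. It would then be isolated in the connected graph $G$ on $n\ge 2$ vertices, which is impossible.
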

		
		
		\begin{lem}[\textnormal{Fan et al.}\cite{a37}]\label{Lemma 2.3}
		Let $\lambda(G) \in \{\rho(G), q(G)\}$, and let $n = \sum_{i=1}^{t} n_i + s$. If $n_1 \ge n_2 \ge \dots \ge n_t \ge 1$ and $n_1 < n - s - t + 1$, then
		\[
		\lambda(K_s \vee (K_{n_1} \cup K_{n_2} \cup \dots \cup K_{n_t})) < \lambda(K_s \vee (K_{n-s-t+1} \cup (t-1)K_1)).
		\]
		\end{lem}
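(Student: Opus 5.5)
The plan is a vertex-moving (Kelmans-type) argument: relocating a single vertex from a smaller clique into the largest clique strictly increases $\lambda$. I would write $G=K_s\vee(K_{n_1}\cup\dots\cup K_{n_t})$ and assume $s\ge 1$, so that $G$ is connected and the Perron vector is positive.

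\emph{Step 1: the Perron vector.} Let $x$ be the positive unit Perron eigenvector of $A(G)$ (resp.\ $Q(G)$). The automorphism group acts transitively on the vertices of $K_s$ and on each $K_{n_i}$. By uniqueness of the Perron vector, $x$ is constant on these classes: value $y$ on $K_s$ and $x_i$ on $K_{n_i}$. The eigen-equation on $K_{n_i}$ gives
\[
x_i=\frac{sy}{\rho-n_i+1}\quad\text{(adjacency)},\qquad x_i=\frac{sy}{q-s-2n_i+2}\quad\text{(signless Laplacian)}.
\]
The denominators are positive. For $\rho$ this holds because $K_{s+n_i}$ is a proper subgraph of $G$, so Lemma~\ref{Lemma 2.2} gives $\rho>s+n_i-1$. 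For $q$ the analogous monotonicity gives $q>q(K_{s+n_i})=2(s+n_i-1)$. In both cases $x_i$ is nondecreasing in $n_i$, so $x_1\ge x_j$ for all $j$.

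\emph{Step 2: one move.} The hypothesis $n_1<n-s-t+1$ means some $n_j\ge 2$ with $j\ge2$. Pick a vertex $v\in K_{n_j}$, delete its $n_j-1$ edges inside $K_{n_j}$, and join it to all $n_1$ vertices of $K_{n_1}$. The result $G'$ is $K_s\vee(K_{n_1+1}\cup\dots\cup K_{n_j-1}\cup\dots)$. By the Rayleigh quotient,
\[
x^TA(G')x-x^TA(G)x=2x_j\bigl(n_1x_1-(n_j-1)x_j\bigr)>0,
\]
since $n_1\ge n_j>n_j-1$ and $x_1\ge x_j>0$. Similarly, using $x^TQx=\sum_{uw\in E}(x_u+x_w)^2$,
\[
x^TQ(G')x-x^TQ(G)x=n_1(x_1+x_j)^2-4(n_j-1)x_j^2>0.
\]
Hence $\lambda(G')\ge x^TM(G')x>x^TM(G)x=\lambda(G)$, where $M\in\{A,Q\}$.

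\emph{Step 3: iterate.} After re-sorting, $G'$ has the same form and $s$, and its largest part has grown by one. Repeating Step 2 as long as some part other than the largest has size at least $2$ yields a strictly increasing chain of values of $\lambda$. The process ends exactly at $K_s\vee(K_{n-s-t+1}\cup(t-1)K_1)$. At least one move occurs, so the inequality is strict.

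The only delicate point is Step 1 for $Q$: positivity of $q-s-2n_i+2$ and the resulting ordering $x_1\ge x_j$. This follows as indicated from strict subgraph monotonicity of $q$ for connected graphs.
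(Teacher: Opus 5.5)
The paper does not prove this lemma; it quotes it from Fan et al.\ \cite{a37}, so there is no in-text argument to compare yours against. Your proposal is a correct and complete proof. It is the standard Perron-vector vertex-moving argument by which lemmas of this type are usually proved. Each step checks out:
\begin{itemize}
\item the Perron vector $x$ is constant on $K_s$ and on each $K_{n_i}$;
\item the formulas for $x_i$ follow from the eigen-equations;
\item the edge-difference computations $2x_j(n_1x_1-(n_j-1)x_j)>0$ and $n_1(x_1+x_j)^2-4(n_j-1)x_j^2>0$ are right;
\item the iteration, which keeps $t$ parts and terminates at $K_s\vee(K_{n-s-t+1}\cup(t-1)K_1)$ after at least one strict step, is sound.
\end{itemize}

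Two small remarks. First, the step you flag as delicate is not delicate. From $x_i(\rho-n_i+1)=sy$ (resp.\ $x_i(q-s-2n_i+2)=sy$) with $x_i,y>0$ and $s\ge 1$, the denominators are automatically positive. So you do not need subgraph monotonicity (Lemmas~\ref{Lemma 2.2} and~\ref{Lemma 2.5}) there at all, and the ordering $x_1\ge x_j$ follows at once.

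Second, you set aside $s=0$ without comment. That case is trivial: $\lambda$ is then attained on the largest clique, and $n_1-1<n-t$ (resp.\ $2(n_1-1)<2(n-t)$) follows directly from $n_1<n-t+1$. In any case, the paper only applies the lemma with $s\ge s_0\ge 2$.
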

		
		\begin{lem}[\textnormal{Hong}\cite{a36}]\label{Lemma 2.4}
		Let $G$ be a graph on $n$ vertices. Then
		\[
		\rho(G) \leq \sqrt{2e(G) - n + 1},
		\]
		with equality if and only if $G$ is a star graph or a complete graph.
		\end{lem}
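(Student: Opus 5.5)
We follow Hong's classical argument, which bounds $\rho(G)^2$ by summing the squared eigenvalue equations of a Perron vector. The statement needs no isolated vertices (for $\overline{K_n}$ with $n\ge 2$ the right-hand side is not even real). So we assume $\delta(G)\ge 1$; this holds in every later application, since the graphs there are connected.

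\emph{Step 1: the squared eigen-equations.} Let $x=(x_1,\dots,x_n)^T$ be a unit nonnegative eigenvector of $A(G)$ for $\rho=\rho(G)$. For each vertex $v_i$ we have $\rho x_i=\sum_{v_j\in N_G(v_i)}x_j$. By Cauchy--Schwarz,
\[
\rho^2x_i^2\le d_G(v_i)\sum_{v_j\in N_G(v_i)}x_j^2=d_G(v_i)\Big(1-\sum_{v_j\notin N_G(v_i)}x_j^2\Big),
\]
where the last sum ranges over all $j$ with $v_j\notin N_G(v_i)$, including $j=i$.

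\emph{Step 2: sum over $i$ and swap the order of summation.} Summing Step 1 over $i$ gives
\[
\rho^2\le 2e(G)-\sum_{j}x_j^2\sum_{i:\,v_i\notin N_G(v_j)}d_G(v_i).
\]
Fix $j$. The indices $i$ with $v_i\notin N_G(v_j)$ are $i=j$, contributing $d_G(v_j)$, together with $n-1-d_G(v_j)$ further vertices. Each of these further vertices has degree at least $1$, so the inner sum is at least $d_G(v_j)+(n-1-d_G(v_j))=n-1$. Since $\sum_j x_j^2=1$, we get $\rho^2\le 2e(G)-(n-1)$, which is the claimed inequality.

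\emph{Step 3: the equality case, which is the main obstacle.} Equality forces equality in both estimates. First, every vertex not adjacent to $v_j$ and different from it must have degree exactly $1$, for each $j$ with $x_j>0$. Second, $x$ must be constant on each neighbourhood $N_G(v_i)$. We first reduce to the connected case: an isolated edge or any second component would strictly lose in the count (or leave $x$ vanishing there), so equality needs $G$ connected, and then $x>0$ by Perron--Frobenius.

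Now suppose $G$ is not complete. Then some $v_j$ has a non-neighbour, and that non-neighbour must be a leaf. Applying the same condition with $v_j$ ranging over all vertices, every vertex lying outside the closed neighbourhood of some other vertex is a leaf. Take two non-adjacent vertices; both are leaves. Every vertex of degree at least $2$ must therefore be adjacent to all other vertices, and since $G$ is connected, the only possibility is a single centre adjacent to all leaves, that is, a star. Conversely, a direct check gives equality for stars ($\rho=\sqrt{n-1}$, $e=n-1$) and for complete graphs ($\rho=n-1$, $e=\binom n2$). The care needed is in this combinatorial case analysis, making sure no graph with several vertices of degree at least $2$ survives.
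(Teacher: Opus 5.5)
Your Steps 1 and 2 are correct and reproduce Hong's original argument. The paper itself gives no proof and only cites Hong. You are also right that the lemma as printed needs an extra hypothesis: even when the right-hand side is real it fails, e.g. for $K_3\cup K_1$, where $\rho=2>\sqrt{3}=\sqrt{2e-n+1}$.

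\textbf{The gap is the ``reduction to the connected case'' in Step 3.} Under your hypothesis $\delta(G)\ge 1$, the claim that an isolated edge or a second component strictly loses in the count is false. For $2K_2$ one has $\rho=1=\sqrt{4-4+1}$, and for $K_3\cup K_2$ one has $\rho=2=\sqrt{8-5+1}$. More generally, $H\cup tK_2$ with $H\in\{K_p,K_{1,p-1}\}$, $p\ge 2$, attains equality. The reason is that adding a $K_2$ component raises both $2e$ and $n$ by $2$ and leaves $\rho=\rho(H)\ge 1$ unchanged.

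Your own equality conditions show why nothing is lost. The Perron vector vanishes on the extra $K_2$. Its two vertices are leaves lying outside the closed neighbourhood of every $v_j$ in the support of $x$, which is exactly what your first equality condition permits. So with $\delta(G)\ge 1$ as the hypothesis, the characterization ``star or complete'' is false, and no argument can force connectivity.

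\textbf{The fix} is to assume $G$ connected from the start. This is Hong's actual statement, and it is all the paper needs: the lemma is applied only to the connected graph $G_2=K_s\vee(K_{n-(k-1)s-2}\cup((k-2)s+2)K_1)$ with $s\ge 1$, and only the inequality is used.

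With connectivity assumed, $x>0$ by Perron--Frobenius, so every vertex has degree $1$ or $n-1$. Your conclusion then follows once you write out the step you only allude to:
\begin{itemize}
\item A connected non-complete graph has a vertex of degree at least $2$, since the only connected graph with all degrees $1$ is $K_2$.
\item If two vertices had degree $n-1$, every other vertex would be adjacent to both, so it would have degree at least $2$, hence degree $n-1$. Then $G$ would be complete.
\end{itemize}
So there is exactly one vertex of degree $n-1$ and all others are leaves attached to it, i.e.\ $G$ is a star.
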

		
		\begin{lem}[\textnormal{Shen et al.}\cite{a8}]\label{Lemma 2.5}
		 Let $G$ be a connected graph, and let $H$ be a subgraph of $G$. Then
		\[
		q(G) \ge q(H),
		\]
		with equality if and only if $G = H$.
		\end{lem}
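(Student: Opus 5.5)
The plan is to compare the quadratic forms of $Q(G)$ and $Q(H)$ using the identity
\[
x^{T}Q(G)x=\sum_{uv\in E(G)}(x_u+x_v)^2 ,
\]
and then to get strictness from the Perron--Frobenius theorem applied to the irreducible matrix $Q(G)$.

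First I pad $H$ to the same vertex set. Let $H'$ be the spanning subgraph of $G$ with $V(H')=V(G)$ and $E(H')=E(H)$; it is $H$ together with $|V(G)|-|V(H)|$ isolated vertices. Then $Q(H')$ is block diagonal, with blocks $Q(H)$ and a zero matrix. Since $Q(H)$ is positive semidefinite, $q(H)\ge 0$, and hence $q(H')=q(H)$.

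Next I compare Rayleigh quotients. Let $y$ be a unit eigenvector of $Q(H')$ for $q(H')$. Since $Q(H')$ is nonnegative, Perron--Frobenius lets us take $y\ge 0$. Then
\[
q(H)=q(H')=y^{T}Q(H')y=\sum_{uv\in E(H)}(y_u+y_v)^2\le \sum_{uv\in E(G)}(y_u+y_v)^2=y^{T}Q(G)y\le q(G).
\]
The middle inequality holds because $E(H)\subseteq E(G)$, and the last is the Rayleigh principle for the symmetric matrix $Q(G)$. This proves $q(G)\ge q(H)$.

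For the equality case, suppose $q(G)=q(H)$. Then both inequalities above are equalities. Equality in the Rayleigh principle forces $y$ to be an eigenvector of $Q(G)$ for $q(G)$. Because $G$ is connected, $Q(G)$ is nonnegative and irreducible, so by Perron--Frobenius this eigenspace is spanned by a positive vector. Since $y\ge 0$ is nonzero, $y>0$ entrywise.

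Now equality in the middle inequality gives $\sum_{uv\in E(G)\setminus E(H)}(y_u+y_v)^2=0$. Every term is positive, so $E(G)=E(H)$. It remains to see that $V(H)=V(G)$. If $V(H)\neq V(G)$, then $G$, being connected with $E(H)=E(G)$, has an edge incident to a vertex outside $H$, which is impossible since every edge of $G$ lies in $H$. The only exception is $G=K_1$, where the claim is trivial. Hence $G=H$.

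The converse direction of the equality statement is immediate. The only delicate point is justifying $y>0$, and that is exactly where connectivity of $G$ (irreducibility of $Q(G)$) is used.
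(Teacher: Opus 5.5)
Your proof is correct and complete. The paper gives no argument of its own for this lemma; it simply quotes it from Shen et al., where it rests on the standard Perron--Frobenius monotonicity principle.

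That standard route also pads $H$ to the spanning subgraph $H'$, exactly as you do. It then observes that $0\le Q(H')\le Q(G)$ entrywise, since degrees and adjacencies can only increase when passing from $H'$ to $G$. Finally it invokes the fact that if $B$ is irreducible and nonnegative and $0\le A\le B$ with $A\ne B$, then $\rho(A)<\rho(B)$. Since $Q(H')=Q(G)$ if and only if $E(H)=E(G)$, connectivity then forces $V(H)=V(G)$ as in your last step.

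Your Rayleigh-quotient argument via $x^{T}Q(G)x=\sum_{uv\in E(G)}(x_u+x_v)^2$ is essentially a proof of that strict-monotonicity fact, specialized to symmetric matrices. It uses positive semidefiniteness to identify $q$ with the spectral radius and to apply the Rayleigh principle. It uses simplicity of the Perron root of $Q(G)$ to obtain $y>0$.

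What your route buys is transparency. It shows the weak inequality $q(G)\ge q(H)$ holds for every $G$, connected or not, and for any top eigenvector $y$ of $Q(H')$. The sign condition $y\ge 0$ is needed only in the equality case, and connectivity enters solely to make every term $(y_u+y_v)^2$ over $E(G)\setminus E(H)$ strictly positive. The matrix-domination route is shorter to state and applies verbatim to non-symmetric nonnegative matrices, but it hides where connectivity is used inside the Perron--Frobenius lemma.
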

		
		\begin{lem}[\textnormal{Das}\cite{a40}]\label{Lemma 2.6}
			 Let $G$ be a graph with $n$ vertices. Then
			\[
			q(G) \le \frac{2e(G)}{n - 1} + n - 2,
			\]
			with equality if and only if $G$ is a star graph or a complete graph.
		\end{lem}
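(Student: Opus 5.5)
The approach is to bound $q(G)$ by a vertex-local quantity and then optimize over the degree of that vertex. The standard first step is the row-sum bound for $Q(G)$ after a diagonal similarity by $D(G)$ (assume for now that $G$ has no isolated vertices). Conjugating $Q(G)=D(G)+A(G)$ by $D(G)$ gives the matrix $D^{-1}QD$, whose row of $v$ sums to $d_v+\frac{1}{d_v}\sum_{u\in N_G(v)}d_u=d_v+m_v$. Hence
\[
q(G)\le \max_{v\in V(G)}\bigl(d_v+m_v\bigr),
\]
where $m_v$ is the average degree of the neighbours of $v$. Equality in this bound requires all these row sums to be equal (Perron--Frobenius applied to each component).

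Next I bound $S_v:=\sum_{u\in N_G(v)}d_u$ in two ways. Write $d=d_v$.
\begin{itemize}
\item Counting edges incident to $N_G(v)$ gives $S_v\le 2e(G)-d$. Every edge is counted at most twice in $\sum_{w}d_w$, and the $d$ edges at $v$ contribute only once to $S_v$.
\item Trivially $d_u\le n-1$ for every $u$, so $S_v\le d(n-1)$.
\end{itemize}
Therefore
\[
d_v+m_v\le f(d):=\min\Bigl\{\,d-1+\tfrac{2e(G)}{d},\; d+n-1\Bigr\},
\]
and it remains to show $f(d)\le \frac{2e(G)}{n-1}+n-2$ for $1\le d\le n-1$.

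At $d=n-1$ the first expression equals exactly $\frac{2e(G)}{n-1}+n-2$. The function $d\mapsto d-1+\frac{2e(G)}{d}$ is convex, so on any interval $[d_0,n-1]$ it is bounded by its values at the two endpoints. For small $d$ I use the second expression instead. For example, at $d=1$ it gives $n$, and $n\le \frac{2e(G)}{n-1}+n-2$ is equivalent to $e(G)\ge n-1$, which holds whenever $G$ is connected.

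The main obstacle is choosing the crossover point $d_0$ so that both estimates cover the whole range. Concretely, I need $d+n-1\le \frac{2e}{n-1}+n-2$ for $d\le d_0$, and $d_0-1+\frac{2e}{d_0}\le\frac{2e}{n-1}+n-2$ at $d=d_0$. I expect this to require care, and possibly a sharper version of the first estimate. I would try replacing it by $S_v\le 2e(G)-d-e(N_G(v),W)-2e(W)$, where $W=V(G)\setminus N_G[v]$. I would also handle a disconnected $G$ (or isolated vertices) separately, applying the bound componentwise, since the right-hand side is increasing in $e(G)$ and $n$.

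For the equality case, tracing the argument, equality forces $d_v=n-1$ for a maximizing vertex and all row sums of $D^{-1}QD$ to be equal. This means either every vertex has degree $n-1$, giving $K_n$, or the remaining vertices have degree $1$ and are attached to that centre, giving the star $K_{1,n-1}$. Both graphs attain equality by direct computation: $q(K_n)=2n-2$ and $q(K_{1,n-1})=n$.
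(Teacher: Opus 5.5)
The paper only quotes this lemma from Das, so your argument is judged on its own. Your first step, $q(G)\le\max_v(d_v+m_v)$ via $D^{-1}QD$, is correct, and so are both estimates on $S_v$. The gap is the reduction ``it remains to show $f(d)\le\frac{2e(G)}{n-1}+n-2$'': this is false, so no choice of crossover point $d_0$ can finish the proof.

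Put $R=\frac{2e}{n-1}+n-2$ and $x=\frac{2e}{n-1}$. The quadratic $d^2-(R+1)d+2e$ has roots $x$ and $n-1$. Hence $d-1+\frac{2e}{d}\le R$ holds exactly for $d$ between $x$ and $n-1$, while $d+n-1\le R$ holds exactly for $d\le x-1$. If $1<x<n-1$ and $x\notin\mathbb{Z}$, the integer $d=\lfloor x\rfloor$ is covered by neither. For instance, $n=10$, $e=14$, $d=3$ gives $R=\frac{100}{9}$ but $f(3)=\min\{\frac{34}{3},12\}=\frac{102}{9}$.

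The reason is that both of your estimates on $S_v$ are decoupled from $e$. The bound $S_v\le 2e-d$ is tight only if every vertex of $W$ is isolated. The bound $S_v\le d(n-1)$ is tight only if all neighbours of $v$ are universal, and that itself forces many edges.

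The missing idea is to keep this coupling. Set $\alpha=e(G[N_G(v)])$, $\beta=e(N_G(v),W)$ and $\gamma=e(G[W])$. Then exactly $S_v=d+2\alpha+\beta$ and $e=d+\alpha+\beta+\gamma$, so
\[
R-d_v-m_v=\frac{2(d+\alpha+\beta+\gamma)}{n-1}+n-3-d-\frac{2\alpha+\beta}{d}.
\]
This is affine in $(\alpha,\beta,\gamma)$, with coefficients $\frac{2}{n-1}-\frac{2}{d}\le 0$, $\frac{2}{n-1}-\frac{1}{d}$ and $\frac{2}{n-1}>0$ on $\alpha$, $\beta$, $\gamma$ respectively. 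Take the box $0\le\alpha\le\binom{d}{2}$, $0\le\beta\le d(n-1-d)$, $\gamma\ge 0$. The minimum is attained at $\alpha=\binom{d}{2}$, $\gamma=0$ and $\beta\in\{0,d(n-1-d)\}$. The corresponding values are $\frac{(n-1-d)(n-2-d)}{n-1}$ and $\frac{(d-1)(n-1-d)}{n-1}$, both nonnegative for integers $1\le d\le n-1$. This gives $d_v+m_v\le R$ for every non-isolated $v$.

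The same computation repairs your equality step, which as written is unsupported. Nothing in your chain forces a maximizing vertex to have degree $n-1$; in the star every vertex is maximizing. For connected $G$, all row sums must equal $R$. A vertex with $d_v<n-1$ then needs $d_v=1$, a universal neighbour, and $W$ independent, which gives $K_{1,n-1}$. The other zero, $d=n-2$ with $\beta=0$, leaves an isolated vertex.

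Finally, the equality clause is false without connectivity. The graph $K_{n-1}\cup K_1$ has $q=2n-4=\frac{2e}{n-1}+n-2$. So your componentwise extension is fine for the inequality, but the characterization needs $G$ connected. That is Das's setting, and it is all the paper uses, since it applies the lemma to the connected graph $G_2$.
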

		
		\begin{lem}[\textnormal{Chen, Li and Xu}\cite{a1}]\label{Lemma 2.7}
			 Let $n = \sum_{i=1}^t n_i + s$. If $n_1 \ge n_2 \ge \dots \ge n_t \ge 1$ and $n_1 \le n - s - t + 1$, then
			\[
			e(K_s \vee (K_{n_1} \cup K_{n_2} \cup \dots \cup K_{n_t})) \le e(K_s \vee (K_{n-s-t+1} \cup (t-1)K_1)).
			\]
		\end{lem}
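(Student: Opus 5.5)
The plan is to reduce the statement to an elementary inequality about sums of binomial coefficients. Write $m=n-s=\sum_{i=1}^t n_i$. The first step is to count edges of the join directly:
\[
e\bigl(K_s \vee (K_{n_1}\cup\cdots\cup K_{n_t})\bigr)=\binom{s}{2}+s(n-s)+\sum_{i=1}^t\binom{n_i}{2}.
\]
This holds because the edges come from three sources: those inside $K_s$, those between $K_s$ and the other $n-s$ vertices, and those inside each $K_{n_i}$. The comparison graph $K_s\vee(K_{n-s-t+1}\cup(t-1)K_1)$ has the same $s$ and the same $t$. Its edge count is therefore
\[
\binom{s}{2}+s(n-s)+\binom{n-s-t+1}{2},
\]
since each $K_1$ contributes $\binom{1}{2}=0$. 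So it suffices to show $\sum_{i=1}^t\binom{n_i}{2}\le\binom{m-t+1}{2}$ whenever $n_i\ge1$ and $\sum n_i=m$.

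For this I would use a vertex-shifting (convexity) argument. Take integers $a\ge b\ge 2$. Replacing the pair $(a,b)$ by $(a+1,b-1)$ changes $\binom{a}{2}+\binom{b}{2}$ by
\[
\binom{a+1}{2}-\binom{a}{2}+\binom{b-1}{2}-\binom{b}{2}=a-(b-1)\ge 1>0.
\]
This move keeps the sum $m$ fixed, keeps the number of parts equal to $t$, and keeps every part at least $1$. Starting from $(n_1,\dots,n_t)$, I repeatedly move one vertex from some part $n_j\ge2$ with $j\ge2$ into the largest part $n_1$. The quantity $\sum_{i\ge2}(n_i-1)$ strictly decreases, so the process terminates. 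It ends at $(m-t+1,1,\dots,1)$, and $\sum\binom{n_i}{2}$ never decreases along the way. This gives the desired inequality.

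As a byproduct, equality holds exactly when $n_2=\cdots=n_t=1$, and the hypothesis $n_1\le n-s-t+1$ is then automatic. There is no real obstacle here. The only points needing care are the bookkeeping that $s$ and $t$ are unchanged by the moves, so the $\binom{s}{2}+s(n-s)$ part is identical on both sides, and checking that the shifting step is legitimate when several parts are equal, which holds because only $a\ge b$ is needed.
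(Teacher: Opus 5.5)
Your proof is correct and complete. The paper gives no proof of this lemma; it quotes it from Chen, Li and Xu \cite{a1} and uses it only once, to get $e(G_1)\le e(G_2)$ in the proof of Theorem~\ref{Theorem 1.4}. Your argument therefore simply makes the paper self-contained. You cancel the common part $\binom{s}{2}+s(n-s)$, then shift vertices into the largest clique, and each move $(a,b)\mapsto(a+1,b-1)$ gains $a-b+1\ge 1$ edges. Always moving into $n_1$ is the right choice: $n_1$ only grows and the other parts only shrink, so $a\ge b$ holds at every step even if the order among $n_2,\dots,n_t$ is lost.

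Two small remarks. First, the hypothesis $n_1\le n-s-t+1$ is redundant in general, not just in the equality case. It follows from $n_i\ge 1$ for $i\ge 2$ and $\sum_i n_i=n-s$, which is exactly the computation the paper performs before invoking the lemma. Second, the inequality and its equality case also follow in one line. Write $n_i=1+x_i$ with $x_i\ge 0$ and $X=\sum_i x_i=m-t$. Then $\sum_i\binom{n_i}{2}=\frac12\bigl(\sum_i x_i^2+X\bigr)\le\frac12(X^2+X)=\binom{m-t+1}{2}$, since $\sum_i x_i^2\le(\sum_i x_i)^2$ for nonnegative $x_i$, with equality iff at most one $x_i$ is nonzero.
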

		
		\begin{lem}\label{Lemma 3.1}
			Let $G$ be a connected $\frac{t}{t(k-1)+1}$-tough graph, where $t \ge 1$ and $k \ge 3$ are integers. If $G$ contains no spanning $k$-tree, then there exists a non-empty subset $S \subseteq V(G)$ such that $c(G - S) \ge (k - 2)|S| + 3$ and $|S| \ge \left\lceil \frac{3t}{t + 1} \right\rceil$.
		\end{lem}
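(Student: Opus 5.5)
We argue by contradiction, using Lemma \ref{Lemma 2.1} and then the toughness hypothesis. Suppose $G$ has no spanning $k$-tree. Since $G$ is connected, the contrapositive of Lemma \ref{Lemma 2.1} gives a subset $S \subseteq V(G)$ with
\[
c(G-S) \ge (k-2)|S| + 3.
\]
This $S$ is non-empty: for $S=\emptyset$ the inequality would read $c(G) \ge 3$, contradicting connectivity. So $|S| \ge 1$ and $c(G-S) \ge k+1 \ge 2$, and the toughness condition applies to $S$.

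Write $s=|S|$. Because $G$ is $\frac{t}{t(k-1)+1}$-tough and $c(G-S)\ge 2$, we have
\[
s \ge \frac{t}{t(k-1)+1}\, c(G-S) \ge \frac{t}{t(k-1)+1}\bigl((k-2)s+3\bigr).
\]
Multiplying by $t(k-1)+1>0$ gives $s\,(t(k-1)+1) \ge t(k-2)s + 3t$. The left side minus $t(k-2)s$ equals $s\,(t(k-1)+1 - t(k-2)) = s(t+1)$, so $s(t+1) \ge 3t$, that is,
\[
s \ge \frac{3t}{t+1}.
\]
Since $s$ is an integer, $s \ge \left\lceil \frac{3t}{t+1}\right\rceil$, which is the claim.

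No step here is a real obstacle. The only care needed is to confirm that $S\neq\emptyset$ and $c(G-S)\ge 2$ before invoking the toughness definition, and to track the coefficient cancellation $t(k-1)+1-t(k-2)=t+1$.
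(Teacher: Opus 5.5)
Your proof is correct and follows the same route as the paper: apply Lemma~\ref{Lemma 2.1} to get $S$ with $c(G-S)\ge (k-2)|S|+3$, then use the toughness bound and simplify to $|S|(t+1)\ge 3t$. You also check explicitly that $S\neq\emptyset$ and $c(G-S)\ge 2$, so the toughness definition applies; the paper leaves these details implicit.
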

		
		\begin{proof}
			Suppose $G$ contains no spanning $k$-tree. By Lemma \ref{Lemma 2.1}, there exists a non-empty subset $S$ of $V(G)$ such that
			\[
			c(G-S) \ge (k-2)|S| + 3.
			\]
			
			By the definition of a $\frac{t}{t(k-1)+1}$-tough graph,
			\[
			\frac{|S|}{c(G-S)} \ge \tau(G) \ge \frac{t}{t(k-1)+1}.
			\]
			
			Then, $(t(k-1)+1)|S| \ge t \cdot c(G-S) \ge t((k-2)|S|+3)$.
			
			Thus, $|S| \ge \left\lceil \frac{3t}{t+1} \right\rceil$.
		\end{proof}

		\section{ Proofs of main results }\label{sec3}
		
		In this section, the proofs of our main results are given.

		\subsection{Proof of Theorem \ref{Theorem 1.3}}
		
		\begin{proof}[\textbf{Proof of Theorem \ref{Theorem 1.3}}]
		 Suppose that the connected $\frac{t}{t(k-1)+1}$-tough graph $G$ contains no spanning $k$-tree, where $t \ge 1$ and $k \ge 3$ are integers. Let $|S|=s$ and $s_0 = \left\lceil \frac{3t}{t+1} \right\rceil$. By Lemma \ref{Lemma 3.1}, $s \ge s_0$, and there exists a graph $G_1 = K_s \vee (K_{n_1} \cup K_{n_2} \cup \dots \cup K_{n_{(k-2)s+3}})$ such that $G$ is a spanning subgraph of $G_1$, where $n_1 \ge n_2 \ge \dots \ge n_{(k-2)s+3} \ge 1$ and $\sum_{i=1}^{(k-2)s+3} n_i = n-s$.
		
		Let $\lambda(G) \in \{\rho(G), q(G)\}$. By Lemmas \ref{Lemma 2.2} and   \ref{Lemma 2.5}, $\lambda(G) \le \lambda(G_1)$, where equality holds if and only if $G \cong G_1$. Let $G_2 = K_s \vee (K_{n-(k-1)s-2} \cup ((k-2)s+2)K_1)$, where $n \ge (k-1)s+3$. Since $n_1 = n - s - \sum_{i=2}^{(k-2)s+3} n_i \le n - s - [(k-2)s+2] = n - s - [(k-2)s+3]+1$, by Lemma \ref{Lemma 2.3}, $\lambda(G_1) \le \lambda(G_2)$, where equality holds if and only if $(n_1, n_2, \dots, n_{(k-2)s+3}) = (n - (k-1)s - 2, 1, \dots, 1)$.
		
		If $s = s_0$, then $G_2 = K_{s_0} \vee (K_{n-s_0(k-1)-2} \cup (s_0(k-2)+2)K_1)$. Therefore,
		\[\lambda(G) \le \lambda (K_{s_0} \vee (K_{n-s_0(k-1)-2} \cup (s_0(k-2)+2)K_1)),\]
		where equality holds if and only if $G \cong K_{s_0} \vee (K_{n-s_0(k-1)-2} \cup (s_0(k-2)+2)K_1)$. By assumption, $G \cong K_{s_0} \vee (K_{n-s_0(k-1)-2} \cup (s_0(k-2)+2)K_1)$. Since $t \ge 1$, $s_0 = \lceil \frac{3t}{t+1} \rceil \in \{2, 3\}$. Since $n \geq N_i(s_0, k)$ ($i=1, 2$) and $k \geq 3$, $n \geq (s_0+1)k+s_0+3$. Then $n - (k - 1)s_0 - 2 \geq (s_0 + 1)k + s_0 + 3 - (k - 1)s_0 - 2 = k + 2s_0 + 1 \geq 8$. So, the complete graph $K_{n-s_0(k-1)-2}$ has Hamilton path $P$ of order at least 8. Thus, we can easily find a spanning $k$-tree in $G$ (depicted in Fig. \ref{fig 1}), contradicting the assumption that $G$ has no spanning $k$-tree.
		
		\begin{figure}[htbp]
			\centering
			\includegraphics[width=0.8\linewidth]{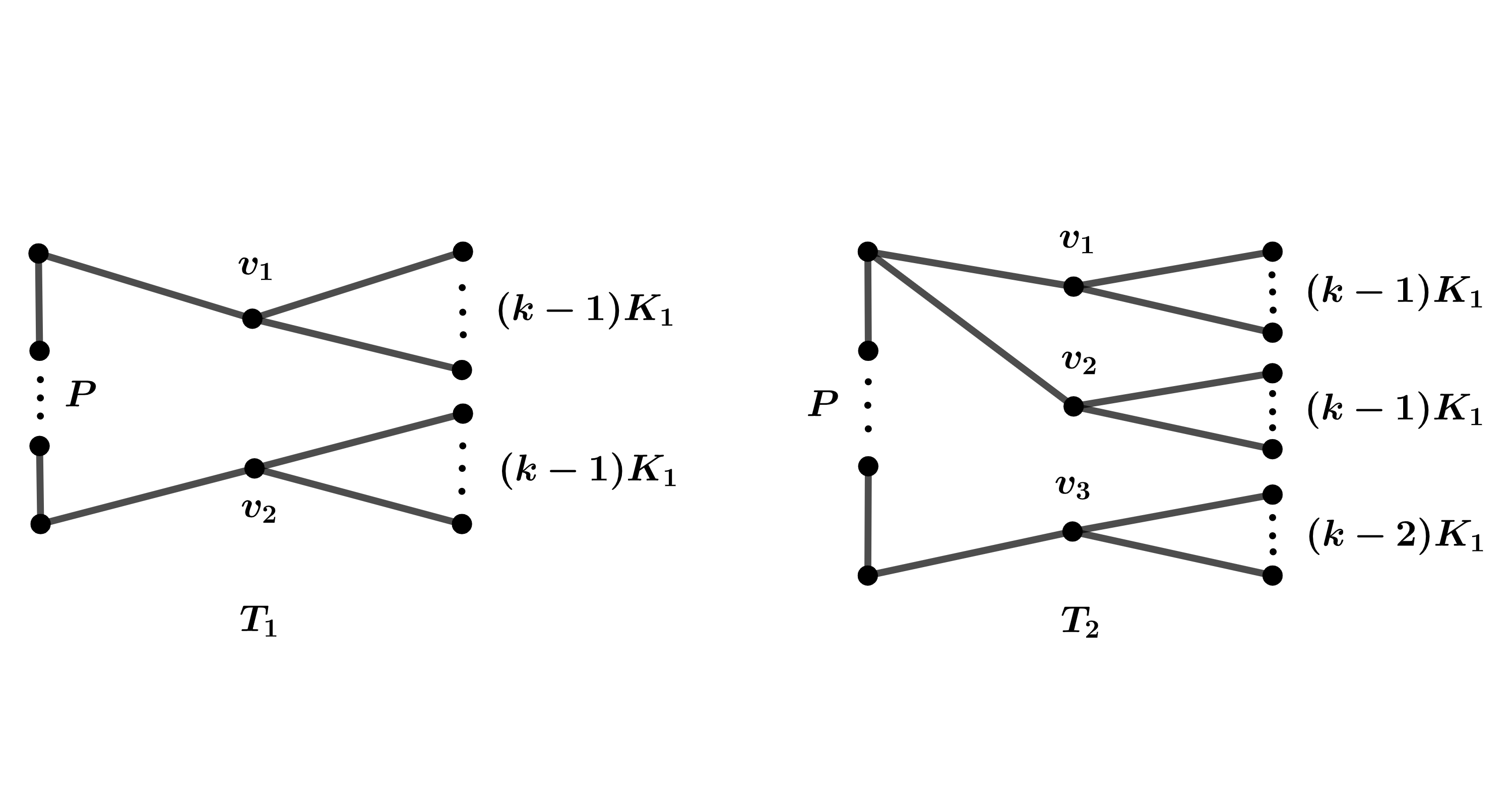}
			\caption{A spanning $k$-tree $T_1$ for $s_0 = 2$ and a spanning $k$-tree $T_2$ for $s_0 = 3$.}
			\label{fig 1}
		\end{figure}

		Hence $s \ge s_0 + 1$. Then
		\begin{align}
			2e(G_2) &= 2 \binom{n-(k-2)s-2}{2} + 2s((k-2)s+2) \notag \\
			&= k(k-2)s^2 + (-2kn+4n+5k-6)s + n^2 - 5n + 6 \label{1}.
		\end{align}
		
		 Next we distinguish the following two cases.
		
		\textbf{Case \textcolor{teal}{1}.} $\lambda(G) = \rho(G)$.

		By \eqref{1} and Lemma \ref{Lemma 2.4},
		\begin{align}
			\rho(G_2) &\le \sqrt{2e(G_2)-n+1} \notag \\
			&= \sqrt{k(k-2)s^2 + (-2kn+4n+5k-6)s + n^2 - 6n + 7}. \label{2}
		\end{align}
		
		Let $f(s) = k(k-2)s^2 + (-2kn+4n+5k-6)s + n^2 - 6n + 7$. Since $s \ge s_0+1$ and $n \ge (k-1)s+3$, $s_0+1 \le s \le \frac{n-3}{k-1}$. Since $t \ge 1$ and $k \ge 3$ are integers, and $n \ge N_1(s_0, k) $, a direct algebraic calculation yields
		\begin{align*}
	& f(s_0+1) - f\left(\frac{n-3}{k-1}\right) \\
	={}& \frac{(n-s_0k-k+s_0-2) [(k-2)^2n - (s_0+1)k^3 + (3s_0+1)k^2 - (2s_0-3)k - 6]}{(k-1)^2} \\
	\ge{}& 0.
		\end{align*}
		
		Note that $f(s)$ is a convex function of $s$, since the leading coefficient $k(k-2)$ is positive for $k \ge 3$. Consequently, $f(s)$ attains its maximum on the interval $[s_0 + 1, \frac{n-3}{k-1}]$ at one of its endpoints. Combining this with the inequality $f(s_0 + 1) \ge f\left(\frac{n-3}{k-1}\right)$ established above, we conclude that the maximum value of $f(s)$ is attained at $s = s_0 + 1$. By combining with the assumptions,  we obtain
		\begin{align*}
			\rho(G_2) &\le \sqrt{f(s_0+1)} \\
			&= \Big[ (n-s_0(k-2)-3)^2 - 2(k-2)n + 2(k-2)s_0^2 + (2k^2-5k+6)s_0 + k^2+3k-8 \Big]^{\frac{1}{2}} \\
			&\le \Big[ (n-s_0(k-2)-3)^2 - 2(k-2)\left( s_0^2 + \frac{2k^2-5k+6}{2(k-2)}s_0 + \frac{k^2+3k-8}{2(k-2)} \right) \\
			&\quad + 2(k-2)s_0^2 + (2k^2-5k+6)s_0 + k^2+3k-8 \Big]^{\frac{1}{2}} \\
			&= \sqrt{(n-s_0(k-2)-3)^2} \\
			&= n-s_0(k-2)-3.
		\end{align*}
		
		Since $K_{n-s_0(k-2)-2}$ is a proper subgraph of $K_{s_0} \vee (K_{n-s_0(k-1)-2} \cup (s_0(k-2)+2)K_1)$, by Lemma \ref{Lemma 2.2},
		\begin{align*}
			\rho(G) \le \rho(G_1) \le \rho(G_2) &\le n - s_0(k-2) - 3 = \rho(K_{n-s_0(k-2)-2}) \\
			&< \rho(K_{s_0} \vee (K_{n-s_0(k-1)-2} \cup (s_0(k-2)+2)K_1)),
		\end{align*}
		contradicting the assumption that $\rho(G) \ge \rho(K_{s_0} \vee (K_{n-s_0(k-1)-2} \cup (s_0(k-2)+2)K_1))$.

		\textbf{Case \textcolor{teal}{2}.} $\lambda(G) = q(G)$.
		
		By \eqref{1} and Lemma \ref{Lemma 2.6},
		\begin{align}
			q(G_2) &\le \frac{2e(G_2)}{n-1} + n - 2 \notag \\
			&= \frac{k(k-2)s^2 + (-2kn+4n+5k-6)s + 2n^2 - 8n + 8}{n-1} \label{3}.
		\end{align}
		
		Let $g(s) = k(k-2)s^2 + (-2kn+4n+5k-6)s + 2n^2 - 8n + 8$. Since $s \ge s_0+1$ and $n \ge (k-1)s+3$, $s_0+1 \le s \le \frac{n-3}{k-1}$. Since $t \ge 1$ and $k \ge 3$ are integers, and	$n \ge N_2(s_0, k) $, a direct algebraic calculation yields
		\begin{align*}
			& g(s_0+1) - g\left(\frac{n-3}{k-1}\right) \\
			={}& \frac{(n-s_0k+s_0-k-2) \left[ (k-2)^2n - (s_0+1)k^3 + (3s_0+1)k^2 - (2s_0-3)k - 6 \right]}{(k-1)^2} \\
			\ge{}& 0.
		\end{align*}
		
		Note that $g(s)$ is a convex function of $s$, since the leading coefficient $k(k-2)$ is positive for $k \ge 3$. Therefore, its maximum value on the closed interval $[s_0 + 1, \frac{n-3}{k-1}]$ must be attained at one of the endpoints. Combining this with the inequality $g(s_0 + 1) \ge g\left(\frac{n-3}{k-1}\right)$ established above, we conclude that $g(s) \le g(s_0 + 1)$. By combining with the assumptions, we obtain
		\begin{align*}
			q(G_2) &\le \frac{g(s_0+1)}{n-1} \\
			&= \frac{2n^2 - 2(s_0k - 2s_0 + k + 2)n + (s_0^2 + 2s_0 + 1)k^2 - (2s_0^2 - s_0 - 3)k - 6s_0 + 2}{n-1} \\
			&= 2(n - s_0(k-2) - 3) - \frac{2(k-2)n - (s_0+1)^2k^2 + (2s_0^2+s_0-3)k + 2s_0+4}{n-1} \\
			&\le 2(n - s_0(k-2) - 3) - \frac{1}{n - 1} \Bigg[ 2(k-2)\left( \frac{1}{2}(s_0 + 1)^2 k + \frac{3s_0 + 5}{2} + \frac{2s_0 + 3}{k - 2} \right) \\
			&\quad - (s_0 + 1)^2 k^2 + (2s_0^2 + s_0 - 3)k + 2s_0 + 4 \Bigg] \\
			&= 2(n - s_0(k-2) - 3).
		\end{align*}
		
		Since $K_{n-s_0(k-2)-2}$ is a proper subgraph of $K_{s_0} \vee (K_{n-s_0(k-1)-2} \cup (s_0(k-2)+2)K_1)$, by Lemma \ref{Lemma 2.5},
		\begin{align*}
			q(G) &\le q(G_1) \le q(G_2) \le 2(n-s_0(k-2)-3) = q(K_{n-s_0(k-2)-2}) \\
			&< q(K_{s_0} \vee (K_{n-s_0(k-1)-2} \cup (s_0(k-2)+2)K_1)),
		\end{align*}
		contradicting the assumption that $	q(G) \ge q(K_{s_0} \vee (K_{n-s_0(k-1)-2} \cup (s_0(k-2)+2)K_1))$.
		
		This completes the proof of Theorem \ref{Theorem 1.3}.
	\end{proof}

		\subsection{Proof of Theorem \ref{Theorem 1.4}}
		\begin{proof}[\textbf{Proof of Theorem \ref{Theorem 1.4}}]
		Suppose that the connected $\frac{t}{t(k-1)+1}$-tough graph $G$ contains no spanning $k$-tree, where $t \ge 1$ and $k \ge 3$ are integers. Let $|S|=s$ and $s_0 = \left\lceil \frac{3t}{t+1} \right\rceil$. By Lemma \ref{Lemma 3.1}, $s \ge s_0$, and there exists a graph $G_1 = K_s \vee (K_{n_1} \cup K_{n_2} \cup \dots \cup K_{n_{(k-2)s+3}})$ such that $G$ is a spanning subgraph of $G_1$, where $n_1 \ge n_2 \ge \dots \ge n_{(k-2)s+3} \ge 1$ and $\sum_{i=1}^{(k-2)s+3} n_i = n-s$. Then,
		\[
		 e(G) \le e(G_1).
		\]
		
		Let $G_2 = K_s \vee (K_{n-(k-1)s-2} \cup ((k-2)s+2)K_1)$, where $n \ge (k-1)s+3$. Since $n_1 = n - s - \sum_{i=2}^{(k-2)s+3} n_i \le n - s - [(k-2)s+2] = n - (k-1)s - 2$, by Lemma \ref{Lemma 2.7},
		\[
		e(G_1) \le e(G_2).
		\]
		
		Let $G_3 = K_{s_0} \vee (K_{n-(k-1)s_0-2} \cup ((k-2)s_0+2)K_1)$. Then,
		\[
		e(G_3) = \binom{n-(k-2)s_0-2}{2} + s_0((k-2)s_0+2).
		\]
		
		If $s = s_0$, then $G_2 = G_3$. Then,
		\[e(G) \le e(G_1) \le e(G_2) = e(G_3) = \binom{n-(k-2)s_0-2}{2} + s_0((k-2)s_0+2),\]
		contradicting the assumption that $	e(G) > \binom{n-s_0(k-2)-2}{2} + s_0(s_0(k-2)+2)$.
		
		Then $s \ge s_0+1$. Since $n \ge (k-1)s+3$, $s_0+1 \le s \le \frac{n-3}{k-1}$. Note that $n \ge \frac{s_0k^3 - (3s_0-2)k^2 + (2s_0-5)k + 6}{(k-2)^2}$. Then
		\begin{align*}
			& \binom{n-s_0(k-2)-2}{2} + s_0(s_0(k-2)+2) - e(G_2) \\
			={}& \binom{n-s_0(k-2)-2}{2} + s_0(s_0(k-2)+2) - \binom{n-(k-2)s-2}{2} - s(s(k-2)+2) \\
			={}& \frac{1}{2}(s-s_0)(2(k-2)n - (k^2-2k)s - s_0k^2 + (2s_0-5)k + 6) \\
			\ge{}& \frac{1}{2}(s-s_0)\left(2(k-2)n - (k^2-2k)\frac{n-3}{k-1} - s_0k^2 + (2s_0-5)k + 6\right) \\
			={}& \frac{s-s_0}{2(k-1)} \left((k-2)^2n - (s_0k^3 - (3s_0-2)k^2 + (2s_0-5)k + 6)\right) \\
			\ge{}& 0.
		\end{align*}
		
		Then, $e(G) \le e(G_1) \le e(G_2) \le \binom{n-s_0(k-2)-2}{2} + s_0((k-2)s_0+2)$, contradicting the assumption that $	e(G) > \binom{n-s_0(k-2)-2}{2} + s_0(s_0(k-2)+2)$.
		
		This completes the proof of Theorem \ref{Theorem 1.4}.
	\end{proof}

		\section*{ Declaration of competing interest }
		There is no conflict of interest.

		\section*{ Data availability }
		No data was used for the research described in the paper.
		
		\section*{ Acknowledgements }
		The second author would like to thank the hospitality of the National Institute of Education, Nanyang Technological University in Singapore, where part of the work was done. This work was partly supported by the National Natural Science Foundation of China (No.~12101126), Natural Science Foundation of Fujian Province (No.~2023J01539). This work was also partly supported by China Scholarship Council (No.~202409100010).

	\end{CJK}

\begin{thebibliography}{00}
			\bibitem{a27} G.Y. Ao, R.F. Liu, J.J. Yuan, Sufficient conditions for $k$-factor-critical graphs and spanning $k$-trees of graphs, J. Algebr. Comb. 61 (2025) 28.
			\bibitem{a20} D. Bauer, H.J. Broersma, H.J. Veldman, Not every 2-tough graph is Hamiltonian, Discrete Appl. Math. 99 (2000) 317--321.
			\bibitem{a21} H.J. Broersma, V. Patel, A. Pyatkin, On toughness and hamiltonicity of $2K_2$-free graphs, J. Graph Theory 75 (2014) 244--255.
	    	\bibitem{a39} H.Z. Chen, J.X. Li, S.J. Xu, Some results on the existence of spanning $k$-ended trees in a $t$-connected Graph, Acta Math. Appl. Sin., Engl. Ser. (2025) 1--12.
	    	\bibitem{a1} H.Z. Chen, J.X. Li, S.J. Xu, Two variants of toughness of a graph and its eigenvalues, Graphs Combin. 41 (2025) 41.
	    	\bibitem{a30} H.Z. Chen, X. Lv, J. Li, S.J. Xu, Sufficient conditions for spanning trees with constrained leaf distance in a graph, Discuss. Math. Graph Theory 45 (2025) 253--266.
	    	\bibitem{a25} V. Chv\'{a}tal, Tough graphs and Hamiltonian circuits, Discrete Math. 5 (1973) 215--228.
			\bibitem{a40} K. Das, Maximizing the sum of the squares of the degrees of a graph, Discrete Math. 285 (2004) 57--66.
			\bibitem{a37} D.D. Fan, S. Goryainov, X.Y. Huang, H.Q. Lin, The spanning $k$-trees, perfect matchings and spectral radius of graphs, Linear Multilinear Algebra. 70 (2022) 7264--7275.
			\bibitem{a22} Y. Gao, S. Shan, Hamiltonian cycles in $7$-tough $(P_3 \cup 2P_1)$-free graphs, Discrete Math. 345 (2022) 113069.
			\bibitem{a36} Y. Hong, A bound on the spectral radius of graphs, Linear Algebra Appl. 108 (1988) 135--139.
			\bibitem{a24}  C.L. Jia, Y. Lu, Sufficient conditions for spanning $k$-trees in tough graphs, arXiv:2604.27908 (2026).
			\bibitem{a23} A. Kabela, T. Kaiser, 10-tough chordal graphs are Hamiltonian, J. Comb. Theory, Ser. B 122 (2017) 417--427.
			\bibitem{a33} D. Kratsch, J. Lehel, H. M\"{u}ller, Toughness, hamiltonicity and split graphs, Discrete Math. 150 (1996) 231--246.
			\bibitem{a3} Q. Li, K.Q. Feng, On the largest eigenvalue of a graph, Acta Math. Appl. Sinica. 2 (1979) 167--175.
			\bibitem{a34}  R.F. Liu, A. Fan, J.L. Shu, Spectral extremal problems on factors in tough graphs, and beyond, Discrete Math. 348 (2025) 114593.
			\bibitem{a14} R.F. Liu, W.C. Shiu, J. Xue, Sufficient spectral conditions on Hamiltonian and traceable graphs, Linear Algebra Appl. 467 (2015) 254--266.
			\bibitem{a31} B. Ning, J. Ge, Spectral radius and Hamiltonian properties of graphs, Linear Multilinear Algebra. 63 (2015) 1520--1530.
			\bibitem{a5} K. Ota, M. Sanka, Hamiltonian cycles in 2-tough $2K_2$-free graphs, J. Graph Theory 101 (2022) 769--781.
			\bibitem{a6} S. Shan, Hamiltonian cycles in 3-tough $2K_2$-free graphs, J. Graph Theory 94 (2020) 349--363.
			\bibitem{a8} Y. Shen, L.H. You, M.J. Zhang, S.C. Li, On a conjecture for the signless Laplacian spectral radius of cacti with given matching number, Linear Multilinear Algebra. 65 (2017) 457--474.
			\bibitem{a2} S. Win, On a connection between the existence of $k$-trees and the toughness of a graph, Graphs Combin. 5 (1989) 201--205.
			\bibitem{a26} J. Wu, Characterizing spanning trees via the size or the spectral radius of graphs, Aequat. Math. 98 (2024) 1441--1455.
			\bibitem{a4} S.Z. Zhou, J.C. Wu, Spanning $k$-trees and distance spectral radius in graphs, J. Supercomput. 80 (2024) 23357--23366.
			\bibitem{a28} S.Z. Zhou, Y.L. Zhang, H.X. Liu, Spanning $k$-trees and distance signless Laplacian spectral radius of graphs, Discrete Appl. Math. 358 (2024) 358--365.
			
		
			
			
			
		\end{thebibliography}
\end{document}